\providecommand{\U}[1]{\protect\rule{.1in}{.1in}}
\newtheorem{theorem}{Theorem}
\newtheorem{definition}[theorem]{Definition}
\newtheorem{example}[theorem]{Example}
\newtheorem{proposition}[theorem]{Proposition}
\newtheorem{remark}[theorem]{Remark}
\newenvironment{proof}[1][Proof]{\noindent\textbf{#1.} }{\ \rule{0.5em}{0.5em}}
\begin{document}

\title{A diagonal principle for nets}
\author{Youssef Azouzi\\{\small Research Laboratory of Algebra, Topology, Arithmetic, and Order}\\{\small Department of Mathematics}\\{\small Faculty of Mathematical, Physical and Natural Sciences of Tunis}\\{\small Tunis-El Manar University, 2092-El Manar, Tunisia}}
\date{}
\maketitle

\begin{abstract}
In our work, we provide a constructive proof of a generalized version of
Cantor's diagonal argument for nets. This result expands the well-known
technique beyond sequences, allowing it to be applied to a broader context.
This result has significant potential applications in various fields, and we
demonstrate a few of these in our work. One such application is the solution
to a previously open problem posed by Kandi\'{c}, Marabeh, and Troitsky.
Specifically, we show that the set of unbounded norm compact operators from a
Banach space to a Banach lattice is closed.

\end{abstract}

\section{Introduction}

The main purpose of this paper is to give a `net version' of diagonal process.
This principle has not been previously observed, and it enables the
generalization several results from sequences to nets. It is worth emphasizing
that our proof is constructive and offers a concrete description of that
desired subnet. Several applications are presented to demonstrate the
practical utility of this method. We show namely that the set of unbounded
norm sequential compact operators from a Banach space $X$ to a Banach lattice
$E$ is norm closed, which answers positively an open question asked in 2017 by
Kandi\'{c}, Marabeh and Troitsky \cite{L-171}. The second one deals with order
compact operators as they were defined in \cite{L-287} and shows a similar
result for this kind of operators.

The diagonal process works as follows: Let us start with a sequence $x=\left(
x_{n}\right)  $ belonging to some space $X$. From which we extract a
subsequence $x^{1}=\left(  x_{\varphi_{1}\left(  n\right)  }\right)  ,$ from
this later we extract a second subsequence $x^{2}=\left(  x_{\varphi_{1}%
\circ\varphi_{2}\left(  n\right)  }\right)  $ and so forth. In the $k^{th}$
step we get a sequence $\left(  x_{\varphi_{1}o....\circ\varphi_{k}\left(
n\right)  }\right)  $ which is a subsequence of the previous one. Now the
point is to consider the "diagonal" sequence $y=\left(  x_{\varphi_{1}%
\circ...\circ\varphi_{n}\left(  n\right)  }\right)  ,$ which possesses the
advantageous property that if we discard some of its initial terms, it can be
treated as a subsequence of each sequence $x^{k}.$ Specifically, $\left(
y_{n}\right)  $ constitutes a shared eventual subsequence of all the sequences
$x^{k},$ $k=1,2,...$. Consequently, if each sequence $x^{k}$ exhibits a
property $\left(  \mathcal{P}_{k}\right)  ,$ that is retained when
transitioning to eventual subsequences, hen $\left(  y_{n}\right)  $ satisfies
all these properties. This idea can be expressed in a suitable manner as follows:

\begin{theorem}
\label{Y19-A}Assume that we have a sequence $\left(  x_{n}\right)  $ in some
space $X$ and a sequence of properties $\left(  \mathcal{P}_{k}\right)  .$ We
assume that the following assumptions are satisfied\newline

\begin{enumerate}
\item[(i)] Every property $\left(  \mathcal{P}_{k}\right)  $ remains true upon
transitioning to eventual subsequences.

\item[(ii)] For every $k\in\mathbb{N}$ and for every subsequence $\left(
y_{n}\right)  $ of $\left(  x_{n}\right)  $ there is a further subsequence
$\left(  y_{\varphi\left(  n\right)  }\right)  $ satisfying the property
$\left(  \mathcal{P}_{k}\right)  .$\newline In this case, there exists a
subsequence of the initial sequence $\left(  x_{n}\right)  $ that satisfies
all properties $\left(  \mathcal{P}_{k}\right)  .$
\end{enumerate}
\end{theorem}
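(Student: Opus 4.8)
The plan is to mimic exactly the classical diagonal construction described in the introduction, but package it cleanly so that the two hypotheses do the work. First I would build a nested family of subsequences by recursion on $k$. Start with the full sequence $x^{0}=(x_n)$. Given $x^{k-1}$, a subsequence of $(x_n)$, apply hypothesis (ii) with the property $(\mathcal{P}_k)$ to this subsequence: this yields a further subsequence $x^{k}$ of $x^{k-1}$ that satisfies $(\mathcal{P}_k)$. Recording each passage as a strictly increasing index map, I write $x^{k}=(x_{\psi_k(n)})_n$ where $\psi_k=\varphi_1\circ\cdots\circ\varphi_k$ is the composite of the extraction maps, so that $\psi_k$ refines $\psi_{k-1}$ in the sense that $\{\psi_k(n):n\}\subseteq\{\psi_{k-1}(n):n\}$ and each $\psi_k$ is strictly increasing.

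Next I would define the diagonal sequence $y=(y_n)$ by $y_n=x_{\psi_n(n)}$, i.e.\ pick the $n$-th term of the $n$-th subsequence. The key structural claim is that $(y_n)_{n\ge k}$ is, after discarding its first $k-1$ terms, a subsequence of $x^{k}$: indeed for $n\ge k$ the index $\psi_n(n)$ lies in the range of $\psi_k$ because $\psi_n$ factors through $\psi_k$, and the map $n\mapsto\psi_n(n)$ is strictly increasing (one must check this monotonicity, which follows from strict monotonicity of each $\psi_k$ together with the refinement $\psi_{k+1}(n)\ge\psi_k(n)$). So the tail $(y_n)_{n\ge k}$ is an eventual subsequence of $x^{k}$.

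Finally I would invoke hypothesis (i): since $x^{k}$ satisfies $(\mathcal{P}_k)$, and $(y_n)_{n\ge k}$ is an eventual subsequence of $x^{k}$, the property $(\mathcal{P}_k)$ is inherited by $(y_n)$, for every $k$. Hence $(y_n)$ is a subsequence of the original $(x_n)$ satisfying all the properties simultaneously, which is the desired conclusion.

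The routine parts are the bookkeeping with the index maps; the one genuine point that must be handled with care is establishing that the diagonal index map $n\mapsto\psi_n(n)$ is strictly increasing and that its tail from index $k$ lands inside the range of $\psi_k$, so that ``eventual subsequence'' in hypothesis (i) applies verbatim. I expect this monotonicity-and-containment step to be the main obstacle, though in the sequence setting it is entirely elementary; the real interest of the paper presumably lies in transporting this scheme to nets, where ``subnet'' and ``eventual'' must be defined so that the same three moves (recursive refinement, diagonalization, inheritance) go through.
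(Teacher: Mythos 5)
Your proposal is correct and follows essentially the same route as the paper: the paper's justification of Theorem \ref{Y19-A} is precisely the diagonal construction sketched in its introduction (nested subsequences $x^{k}$ produced recursively from hypothesis (ii), the diagonal sequence $y_{n}=x_{\varphi_{1}\circ\cdots\circ\varphi_{n}\left(  n\right)  }$, the observation that each tail $\left(  y_{n}\right)  _{n\geq k}$ is a subsequence of $x^{k}$, and inheritance of $\left(  \mathcal{P}_{k}\right)  $ via hypothesis (i)), and your write-up formalizes exactly these steps, including the monotonicity of $n\mapsto\psi_{n}\left(  n\right)  $ that makes the diagonal a genuine subsequence.
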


Here $\left(  y_{n}\right)  $ is an eventual subsequence of $\left(
x_{n}\right)  $ if a tail of $\left(  y_{n}\right)  $ is a subsequence of
$\left(  x_{n}\right)  .$

The main purpose of this work is to establish a `net version' of the principle
and subsequently offer a few applications.

\section{Preliminaries}

Let $X$ be an arbitrary nonempty set. A net in $X$ is a map
$x:A\longrightarrow X,$ written $\left(  x_{\alpha}\right)  _{\alpha\in A},$
or simply $\left(  x_{\alpha}\right)  ,$ where $A$ is a directed set. The
later means that there is a binary relation $\leq$ on $A$ satisfying the
following three conditions:

(i) Reflexivity : $\alpha\leq\alpha$ for every $\alpha\in A.$

(ii) Transitivity : $\alpha\leq\beta$ and $\beta\leq\gamma\Longrightarrow
\alpha\leq\gamma.$

(iii) Upward directed : for every $\alpha,\beta\in A$ there exists $\gamma\in
A$ such that $\alpha\leq\gamma$ and $\beta\leq\gamma.$

We write also $\beta\geq\alpha$ instead of $\alpha\leq\beta.$ We recall that
if $X$ is a topological space and $\left(  x_{\alpha}\right)  _{\alpha\in A}$
is a net in $X$ then $\left(  x_{\alpha}\right)  _{\alpha\in A}$ converges to
$x$ if for every neighborhood $V$ of $x$ there exists $\alpha_{0}\in A$ such
that $x_{\alpha}\in V$ for every $\alpha\geq\alpha_{0}.$ We are interested in
this paper to some non topological convergences. Let $L$ be a vector lattice
we say that a net $\left(  x_{\alpha}\right)  _{\alpha\in A}$ is order
convergent to $x$ if there exists another net $\left(  y_{\beta}\right)
_{\beta\in B}$ such that $y_{\beta}\downarrow0$ and for every $\beta\in B,$
there exists $\alpha_{\beta}\in A$ such that $\left\vert x_{\alpha
}-x\right\vert \leq y_{\beta}$ for all $\alpha\geq\alpha_{0}.$ A net $\left(
x_{\alpha}\right)  _{\alpha\in A}$ is said to be unbounded order convergent to
$x$ if for every $u\in L^{+},$ the positive cone of $L,$ the net $\left\vert
x_{\alpha}-x\right\vert \wedge u$ is order convergent to $0.$ This kind of
convergence is interesting because for spaces $L^{p}$ it coincides for
sequences with almost everywhere convergence and for spaces $\ell^{p}$ it
agrees with pointwise convergence. Another kind of unbounded convergence is
unbounded norm convergence studied for example in \cite{L-171} and
\cite{L-173}. Recall that if $X$ is a Banach lattice then a net $\left(
x_{\alpha}\right)  $ is said to be unbounded norm convergent to $x$ and we
write $x_{\alpha}\overset{\mathfrak{un}}{\longrightarrow}x$ if $\left\vert
x_{\alpha}-x\right\vert \wedge u$ converges in norm to zero for every $u\in
X^{+}.$ In $L^{p}\left(  \mu\right)  $ we have $f_{n}\overset{\mathfrak{un}%
}{\longrightarrow}f$ if and only if $f_{n}\longrightarrow f$ in measure.

We need also the following definition.

\begin{definition}
We say that $\left(  y_{\beta}\right)  $ is an eventual subnet (or is
eventually a subnet) of $\left(  x_{\alpha}\right)  $ if there is $\beta_{0}$
such that $\left(  y_{\beta}\right)  _{\beta\geq\beta_{0}}$ is a subnet of
$\left(  x_{\alpha}\right)  .$
\end{definition}

\section{The diagonal process for nets}

The diagonal argument is a powerful technique commonly employed to establish
that if a sequence of maps, denoted by $\left(  f_{n}\right)  ,$ satisfies a
given property $\left(  \mathcal{P}\right)  $ and converges to a map $f$, then
$f$ possesses that same property$.$ One typical application of this method is
demonstrating that the collection of compact operators defined on a Banach
space $X$ is a closed subspace of $L\left(  X\right)  ,$ the space of all
bounded operators from $X$ to its self.

Interestingly, it is often observed in literature that the diagonal argument
for sequences can provide a constructive proof for the positive resolution of
problems in certain situations. However, in cases where each map $f_{n}$
satisfies a property $\left(  \ast\right)  $ related to nets, the problem
becomes more challenging to solve, as there is currently no equivalent
diagonal process for nets available. To illustrate this point, two open
questions from recent papers \cite{L-171} by Kandi\'{c}, Marabeh, and
Troitsky, and \cite{L-287} by Ayd\i n, Emelyanov, Erkur\c{s}un \"{O}zcan and
Marabeh, are worth mentioning as examples of this situation.

In general, properties $\left(  \mathcal{P}_{k}\right)  $ are closely
connected to convergence. Therefore, the following case, which follows as a
consequence of Theorem \ref{Y19-A}, may be with a particular interest.

\begin{theorem}
\label{Y19-B}Let $\left(  E_{k}\right)  $ be a sequence of convergence spaces
(as in \cite{L-748}) and $\left(  x_{n}\right)  $ be a sequence in the product
space $P=%
{\textstyle\prod\limits_{k=1}^{\infty}}
E_{k}.$ We assume that for every subsequence $\left(  y_{n}\right)  $ of
$\left(  x_{n}\right)  $ and for every integer $k,$ there is a subsequence
$\left(  y_{\varphi\left(  n\right)  }\right)  $ such that $y_{\varphi\left(
n\right)  }\left(  k\right)  $ converges in $E_{k}.$ Then there is a
subsequence $\left(  z_{n}\right)  $ of $\left(  x_{n}\right)  $ such that
\[
z_{n}\left(  k\right)  \text{ converges in }E_{k}\text{ for }k=1,2,....
\]

\end{theorem}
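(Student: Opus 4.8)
The plan is to derive Theorem~\ref{Y19-B} as a direct consequence of Theorem~\ref{Y19-A} by choosing the properties $\left(\mathcal{P}_{k}\right)$ appropriately. Specifically, I would define property $\left(\mathcal{P}_{k}\right)$ of a sequence $\left(w_{n}\right)$ in the product space $P$ to be the statement ``the $k$th coordinate sequence $\left(w_{n}\left(k\right)\right)$ converges in $E_{k}$.'' With this choice, the desired conclusion---that some subsequence $\left(z_{n}\right)$ satisfies $z_{n}\left(k\right)$ converges in $E_{k}$ for all $k$---is exactly the assertion that $\left(z_{n}\right)$ satisfies all the properties $\left(\mathcal{P}_{k}\right)$ simultaneously. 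So the whole task reduces to verifying that hypotheses (i) and (ii) of Theorem~\ref{Y19-A} hold for this family of properties.

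First I would verify hypothesis (ii). This is essentially immediate: it is precisely the assumption stated in Theorem~\ref{Y19-B}, namely that for every subsequence $\left(y_{n}\right)$ and every $k$ there is a further subsequence $\left(y_{\varphi\left(n\right)}\right)$ with $y_{\varphi\left(n\right)}\left(k\right)$ convergent in $E_{k}$. This is the verbatim restatement of condition (ii) with $\left(\mathcal{P}_{k}\right)$ as defined above, so no real work is needed here.

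The step requiring genuine care is verifying hypothesis (i): that each property $\left(\mathcal{P}_{k}\right)$ is preserved under passage to eventual subsequences. Here I would argue that if $\left(w_{n}\left(k\right)\right)$ converges in the convergence space $E_{k}$, then so does every subsequence, and in particular every eventual subsequence (since discarding finitely many initial terms does not affect convergence). The point to be careful about is that $E_{k}$ is only a \emph{convergence space} in the sense of \cite{L-748}, not necessarily a topological space, so I cannot blindly invoke the familiar fact that subsequences of a convergent sequence converge to the same limit. I expect this to be the main (albeit mild) obstacle: one must check that the axioms of a convergence space guarantee hereditariness of convergence under subsequences. The relevant axiom is typically that if $\left(w_{n}\right)$ converges to $x$ then every subnet (hence every subsequence) converges to $x$; this is standard in the Kuratowski--Fréchet style axiomatics of convergence spaces, and once it is cited the preservation under eventual subsequences follows directly.

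Having checked (i) and (ii), I would invoke Theorem~\ref{Y19-A} to obtain a subsequence $\left(z_{n}\right)$ of $\left(x_{n}\right)$ satisfying every $\left(\mathcal{P}_{k}\right)$, which by our choice of the properties means exactly that $z_{n}\left(k\right)$ converges in $E_{k}$ for each $k=1,2,\ldots$, completing the proof.
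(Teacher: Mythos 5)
Your proposal is correct and is exactly the paper's (implicit) argument: the paper states Theorem~\ref{Y19-B} as a direct consequence of Theorem~\ref{Y19-A}, obtained by taking $\mathcal{P}_{k}$ to be convergence of the $k$-th coordinate sequence, precisely as you do. You also correctly isolate the only point of substance, namely that hypothesis (i) requires convergence in the spaces of \cite{L-748} to be both inherited by subsequences and insensitive to finitely many initial terms, so that it survives passage to \emph{eventual} subsequences.
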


\begin{remark}
It should be noted that the assumption `eventual subsequences' in Theorem
\ref{Y19-A}.(i) could not be replaced by `subsequences'. To illustrate this,
consider the real sequence $x_{n}=\dfrac{1}{\sqrt{n}}$ and let us say that a
sequence satisfies property $\left(  \mathcal{P}_{k}\right)  $ if $\left\vert
y_{n}\right\vert \leq\dfrac{1}{k^{n}}$ for $n=1,2,....$ It is clear that
$\left(  \mathcal{P}_{k}\right)  $ is preserved under passing to subsequences
and that each subsequence of $\left(  x_{n}\right)  $ has a further
subsequence satisfying $\left(  \mathcal{P}_{k}\right)  .$ However, there is
no subsequence of $\left(  x_{n}\right)  $ satisfying all properties $\left(
\mathcal{P}_{k}\right)  $ because such sequence would be null, and $x_{n}>0$
for all $n.$
\end{remark}

In the next simple example we illustrate Theorem \ref{Y19-A} by considering
some situation when properties $\left(  \mathcal{P}_{k}\right)  $ are not
necessarily related to convergence.

\begin{example}
It is well known that a metric space is compact if and only if is totally
bounded and complete. We will show that if $X$ is totally bounded and complete
then $X$ is sequentially compact, meaning that every sequence in $X$ has a
cluster point. To this end let us fix a sequence $\left(  x_{n}\right)  $ in
$X$ and consider for each integer $k$ the property $\mathcal{P}_{k}$ : A
sequence $\left(  y_{n}\right)  $ has the property $\mathcal{P}_{k}$ if it has
a tail contained in a ball with radius $\dfrac{1}{k}.$ Since $X$ is totally
bounded it is quite obvious that every sequence in $X$ satisfies
$\mathcal{P}_{k}.$ It follows from Theorem \ref{Y19-A} that $\left(
x_{n}\right)  $ has a subsequence $\left(  z_{n}\right)  $ satisfying all
properties $\left(  \mathcal{P}_{k}\right)  .$ In particular $\left(
z_{n}\right)  $ is a Cauchy sequence and so it converges. Thus $\left(
x_{n}\right)  $ has a cluster point as required.
\end{example}

The primary objective of this paper is to extend the previous results to nets.
However, generalizing results to nets is challenging due to the index set
changing when a new subnet is considered. The main result of our study can be
expressed as follows:

\begin{theorem}
\label{Y19-C}Consider a sequence $\left(  E_{k}\right)  $ of convergence
spaces and let $\left(  x_{\alpha}\right)  _{\alpha\in A}$ be a net in the
product space $P=%
{\textstyle\prod\limits_{k=1}^{\infty}}
E_{k}.$ We assume that for every subnet $\left(  y_{\beta}\right)  _{\beta\in
B}$ of $\left(  x_{\alpha}\right)  $ and for every integer $k,$ there is a
subnet $\left(  y_{\varphi\left(  \gamma\right)  }\right)  _{\gamma\in C}$
such that $y_{\varphi\left(  \gamma\right)  }\left(  k\right)  $ converges in
$E_{k}.$ Then there is a subnet $\left(  z_{\theta}\right)  $ of $\left(
x_{\alpha}\right)  $ such that
\[
z_{\theta}\left(  k\right)  \text{ converges in }E_{k}\text{ for }k=1,2,....
\]

\end{theorem}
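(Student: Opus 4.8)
The plan is to imitate the classical diagonal construction, replacing ``extract a subsequence'' by ``pass to a subnet'' and building by hand a single directed set to serve as the diagonal. First I would construct, by induction on $k$, a decreasing tower of subnets. Put $A_{0}=A$ and $x^{0}=x$. Given $x^{k-1}=(x^{k-1}_{\beta})_{\beta\in A_{k-1}}$, which is a subnet of $(x_{\alpha})$ because a subnet of a subnet is again a subnet, apply the hypothesis to $x^{k-1}$ and to the integer $k$ to obtain a subnet $x^{k}=(x^{k}_{\gamma})_{\gamma\in A_{k}}$, given by a monotone cofinal map $\psi_{k}\colon A_{k}\to A_{k-1}$, whose $k$-th coordinate converges in $E_{k}$. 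Writing $\psi_{i,j}=\psi_{i+1}\circ\cdots\circ\psi_{j}\colon A_{j}\to A_{i}$ for $i\le j$ (with $\psi_{i,i}=\mathrm{id}$) and $\Phi_{k}=\psi_{0,k}\colon A_{k}\to A$, each $\psi_{i,j}$ is monotone and cofinal, $\psi_{i,j}\circ\psi_{j,\ell}=\psi_{i,\ell}$, and $x^{k}_{\gamma}=x_{\Phi_{k}(\gamma)}$.

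The heart of the argument is the choice of diagonal index set. I would take
\[
D=\{(k,\gamma):k\in\mathbb{N},\ \gamma\in A_{k}\},\qquad (k,\gamma)\le(k',\gamma')\iff k\le k'\ \text{and}\ \gamma\le\psi_{k,k'}(\gamma').
\]
Reflexivity and transitivity of $\le$ follow from $\psi_{i,i}=\mathrm{id}$, the composition law for the $\psi$'s, and their monotonicity; directedness is where cofinality enters: given $(k_{1},\gamma_{1}),(k_{2},\gamma_{2})$ with $k_{1}\le k_{2}$, cofinality of $\psi_{k_{1},k_{2}}$ yields $c\in A_{k_{2}}$ with $\psi_{k_{1},k_{2}}(c)\ge\gamma_{1}$, and any common upper bound $\gamma$ of $c$ and $\gamma_{2}$ in $A_{k_{2}}$ gives $(k_{2},\gamma)$ above both points. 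Setting $\Theta(k,\gamma)=\Phi_{k}(\gamma)$ and $z_{(k,\gamma)}=x_{\Theta(k,\gamma)}=x^{k}_{\gamma}$, the identity $\Phi_{k'}=\Phi_{k}\circ\psi_{k,k'}$ makes $\Theta$ monotone, while cofinality of $\Phi_{1}$ makes it cofinal; hence $(z_{\theta})_{\theta\in D}$ is a genuine subnet of $(x_{\alpha})$.

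It remains to verify that $z_{\theta}(k_{0})$ converges for each fixed $k_{0}$. Choose any $a_{0}\in A_{k_{0}}$ and consider the tail $T=\{\theta\in D:\theta\ge(k_{0},a_{0})\}$; every $(k,\gamma)\in T$ satisfies $k\ge k_{0}$. On $T$ set $\rho(k,\gamma)=\psi_{k_{0},k}(\gamma)\in A_{k_{0}}$; using $\Phi_{k}=\Phi_{k_{0}}\circ\psi_{k_{0},k}$ one gets $z_{(k,\gamma)}(k_{0})=x^{k_{0}}_{\rho(k,\gamma)}(k_{0})$, and $\rho$ is checked to be monotone and cofinal just as above. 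Thus $(z_{\theta}(k_{0}))_{\theta\in T}$ is a subnet of the convergent net $\big(x^{k_{0}}_{\gamma}(k_{0})\big)_{\gamma\in A_{k_{0}}}$ and therefore converges; that is, $(z_{\theta}(k_{0}))$ is an eventual subnet of a convergent net, and since convergence of a net is unaffected by passing to a tail, the whole net $(z_{\theta}(k_{0}))$ converges. As $k_{0}$ was arbitrary, $(z_{\theta})$ has the required property.

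I expect the main obstacle to be precisely the middle step: producing the right directed set $D$ and its order so that the diagonal net is \emph{simultaneously} a subnet of $(x_{\alpha})$ and, on a tail, a subnet of each $x^{k_{0}}$. In the sequence case the strictly increasing index functions make this automatic, whereas for nets one must establish directedness of $D$ together with the two monotonicity-and-cofinality checks for $\Theta$ and for $\rho$, all of which rest on the cofinality of the connecting maps $\psi_{k,k'}$. The only point deserving a word of care in the convergence-space setting is the final passage from the convergent tail $T$ back to all of $D$, which is legitimate because $T$ is an up-set and convergence depends only on tails.
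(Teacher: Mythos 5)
Your proposal follows the same diagonal strategy as the paper: build a decreasing tower of subnets $x^{k}$ with connecting maps $\psi_{k}$, glue their index sets into one directed set, check that the diagonal net is a subnet of $(x_{\alpha})$, and obtain convergence of each coordinate by exhibiting a tail of the diagonal as a subnet of the corresponding $x^{k_{0}}$. (The paper routes this through the more general Theorem \ref{Y19-D}, stated for abstract properties $\mathcal{P}_{k}$ preserved under eventual subnets, but the construction is the same; in fact your index set $D=\{(k,\gamma):\gamma\in A_{k}\}$ is in bijection with the paper's set $B$ of compatible tuples $(\beta_{0},\dots,\beta_{k})$, since such a tuple is determined by its length and its last entry.)

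The one substantive divergence is your standing assumption that the connecting maps $\psi_{k}$ are \emph{monotone} cofinal maps, i.e.\ that the hypothesis delivers subnets in Willard's sense. Your construction genuinely needs this: transitivity of your order on $D$ requires that $\gamma'\le\psi_{k',k''}(\gamma'')$ imply $\psi_{k,k'}(\gamma')\le\psi_{k,k''}(\gamma'')$, which is exactly monotonicity of $\psi_{k,k'}$, and the monotonicity checks for $\Theta$ and $\rho$ rest on the same point. The paper never assumes monotonicity: its proof uses only Kelley-type cofinality (for every $\alpha_{0}$ there is a threshold beyond which the $\varphi$-values dominate $\alpha_{0}$) and compensates by ordering the diagonal set in \emph{all} coordinates, i.e.\ $(k,\gamma)\preccurlyeq(k',\gamma')$ iff $k\le k'$ and $\psi_{i,k}(\gamma)\le\psi_{i,k'}(\gamma')$ for every $i\le k$; that stronger order is transitive with no monotonicity whatsoever, and directedness is recovered from cofinality alone. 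So if ``subnet'' in Theorem \ref{Y19-C} is understood in Kelley's sense --- which is what the paper's own proof accommodates, and the paper never states a definition requiring monotonicity --- then your argument has a gap precisely at transitivity of $D$, repairable either by switching to the all-coordinates order or by first proving an upgrading lemma replacing Kelley subnets by Willard subnets with the same convergence behaviour (itself a step needing care in the convergence-space setting). Under Willard's convention your proof is complete; your final tail argument, whose delicacy you rightly flag, is needed equally by the paper when it specializes Theorem \ref{Y19-D} to Theorem \ref{Y19-C}.
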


Our result can be expressed in a more general form as follows.

\begin{theorem}
\label{Y19-D}Let $E$ be a nonempty set, $\left(  x_{\alpha}\right)
_{\alpha\in A}$ a net in $E$ and $\left(  \mathcal{P}_{k}\right)
_{k\in\mathbb{N}}$ a sequence of properties that are preserved under passing
to eventual subnets. Additionally, assume that for each $k$ and for each
subnet $\left(  y_{\beta}\right)  _{\beta\in B}$ of $\left(  x_{\alpha
}\right)  $ there is a further subnet $\left(  y_{\varphi\left(
\gamma\right)  }\right)  _{\gamma\in C}$ such that $y_{\varphi\left(
\gamma\right)  }$ satisfies $\mathcal{P}_{k}.$ Then there is a subnet $\left(
z_{\theta}\right)  $ of the initial net $\left(  x_{\alpha}\right)  $ such
that $\left(  z_{\theta}\right)  $ satisfies $\mathcal{P}_{k}$ for all
$k=1,2,....$
\end{theorem}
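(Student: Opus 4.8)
The plan is to mimic the classical diagonal construction, but to replace the numerical diagonal $n\mapsto\varphi_{1}\circ\cdots\circ\varphi_{n}(n)$ by a single directed set assembled from all the iterated index sets at once, equipped with an order chosen so that the resulting net is simultaneously an eventual subnet of every stage. First I would build the nested subnets by recursion. Set $N_{0}=\left(x_{\alpha}\right)_{\alpha\in A}$ with index set $C_{0}=A$; assuming $N_{k-1}$, a subnet of $\left(x_{\alpha}\right)$ with index set $C_{k-1}$, has been constructed, apply the hypothesis with this $k$ to the subnet $N_{k-1}$ to get a further subnet $N_{k}$, given by a monotone cofinal map $\varphi_{k}\colon C_{k}\to C_{k-1}$, such that $N_{k}$ satisfies $\mathcal{P}_{k}$. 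Writing $\Phi_{n}=\varphi_{1}\circ\cdots\circ\varphi_{n}\colon C_{n}\to A$ and, for $m\leq n$, $\Psi_{m,n}=\varphi_{m+1}\circ\cdots\circ\varphi_{n}\colon C_{n}\to C_{m}$ (with $\Psi_{m,m}=\mathrm{id}$), each composite is again monotone and cofinal and $N_{n}=\left(x_{\Phi_{n}(c)}\right)_{c\in C_{n}}$. It suffices that $N_{k}$ satisfy $\mathcal{P}_{k}$ alone; the earlier properties need not be retained at stage $k$.

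The heart of the argument is the diagonal index set. I would put $D=\{(n,c):n\in\mathbb{N},\,c\in C_{n}\}$ and declare $(m,c)\leq(n,d)$ to mean $m\leq n$ and $\Psi_{m,n}(d)\geq c$ in $C_{m}$. Using that the maps $\Psi$ are monotone and cofinal and that the functoriality $\Psi_{l,n}=\Psi_{l,m}\circ\Psi_{m,n}$ holds, one checks that $\leq$ is a reflexive, transitive, upward directed relation; for directedness, given $(m,c)$ and $(n,d)$ with $m\leq n$, cofinality of $\Psi_{m,n}$ produces $e\in C_{n}$ with $\Psi_{m,n}(e)\geq c$ and $e\geq d$, so that $(n,e)$ dominates both. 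I would then set $z_{(n,c)}=x_{\Phi_{n}(c)}$, i.e. the value $(N_{n})_{c}$ at the index $(n,c)$; the assignment $(n,c)\mapsto\Phi_{n}(c)$ is monotone (from $\Phi_{n}=\Phi_{m}\circ\Psi_{m,n}$) and cofinal (choose $c_{0}\in C_{1}$ with $\Phi_{1}(c_{0})\geq\alpha_{0}$), so $\left(z_{\theta}\right)_{\theta\in D}$ is a subnet of $\left(x_{\alpha}\right)$.

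Next I would show that $\left(z_{\theta}\right)$ is an eventual subnet of each $N_{k}$. Fix $k$, fix any $c_{0}\in C_{k}$, and consider the tail $T=\{\theta\in D:\theta\geq(k,c_{0})\}$, on which necessarily $n\geq k$. On $T$ define $\rho_{k}(n,c)=\Psi_{k,n}(c)\in C_{k}$; then $z_{(n,c)}=x_{\Phi_{k}(\Psi_{k,n}(c))}=(N_{k})_{\rho_{k}(n,c)}$. The identity $\Psi_{k,n}=\Psi_{k,m}\circ\Psi_{m,n}$ makes $\rho_{k}$ monotone, and cofinality follows since $\theta\geq(k,c')$ forces $\Psi_{k,n}(c)\geq c'$, so for a given $c_{\ast}\in C_{k}$ any $c'\geq c_{0}$ with $c'\geq c_{\ast}$ serves as a witness. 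Hence $z|_{T}=N_{k}\circ\rho_{k}$ is a subnet of $N_{k}$, and as $T$ is a tail of $\left(z_{\theta}\right)$, the net $\left(z_{\theta}\right)$ is an eventual subnet of $N_{k}$. Since $N_{k}$ satisfies $\mathcal{P}_{k}$ and $\mathcal{P}_{k}$ is preserved under passing to eventual subnets, $\left(z_{\theta}\right)$ satisfies $\mathcal{P}_{k}$; as $k$ was arbitrary, $\left(z_{\theta}\right)$ satisfies every $\mathcal{P}_{k}$.

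The main obstacle, and the point demanding care, is the choice of order on $D$. The naive order comparing the images $\Phi_{m}(c)$ and $\Phi_{n}(d)$ inside $A$ still makes $\left(z_{\theta}\right)$ a subnet of $\left(x_{\alpha}\right)$, but the projections $\rho_{k}$ back into $C_{k}$ then fail to be monotone or cofinal, because $\Phi_{k}$ need not reflect the order of $C_{k}$. Comparing instead within $C_{m}$ through $\Psi_{m,n}$ is precisely what keeps all the downward projections $\rho_{k}$ well behaved while leaving $(n,c)\mapsto\Phi_{n}(c)$ monotone and cofinal; verifying transitivity and directedness of this relation, which rests on the functoriality $\Psi_{l,n}=\Psi_{l,m}\circ\Psi_{m,n}$ together with the cofinality of the connecting maps, is the one genuinely delicate verification in the proof.
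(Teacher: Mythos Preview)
Your proof is correct and follows essentially the same strategy as the paper: both build the nested chain of subnets $N_{k}$ and then assemble a single directed set from the disjoint union of the stage index sets, ordered so that the resulting net is simultaneously a subnet of $(x_{\alpha})$ and an eventual subnet of every $N_{k}$. The paper encodes an index as a tuple $(\beta_{0},\dots,\beta_{k})$ with $\beta_{i-1}=\varphi_{i}(\beta_{i})$, which is canonically the same as your pair $(k,\beta_{k})$; the paper then compares coordinatewise at all levels $0\le i\le p$, whereas you compare only at level $m$ via $\Psi_{m,n}(d)\ge c$, which is equivalent because you have taken the connecting maps $\varphi_{k}$ to be monotone. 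That extra monotonicity assumption is the only substantive packaging difference: it lets you streamline the order and the verification that $(n,c)\mapsto\Phi_{n}(c)$ and the projections $\rho_{k}$ are monotone, while the paper's coordinatewise order is set up to work even for Kelley-style (merely cofinal) connecting maps.
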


\begin{proof}
It follows immediately from our assumptions that there is a sequence
$x^{n}=\left(  x_{\alpha_{n}}^{n}\right)  _{\alpha_{n}\in A_{n}}$ of subnets
of $\left(  x_{\alpha}\right)  _{\alpha\in A}$ such that $x^{n}$ satisfies
$\mathcal{P}_{n}$ and $x^{n}$ is a subnet of $x^{n-1}$ for each $n,$ with
$A_{0}=A$ and $x^{0}=\left(  x_{\alpha}\right)  _{\alpha\in A_{0}}.$ This is
achieved by using cofinal maps $\varphi_{n}:A_{n}\longrightarrow A_{n-1},$
$x_{\alpha_{n}}^{n}=x_{\varphi_{n}\left(  \alpha_{n}\right)  }^{n-1},$
$n=1,2,...$ As every property $\mathcal{P}_{k}$ is preserved under passing to
eventual subnets, it follows that for each $n,$ the subnet $x^{n}$ satisfies
properties $\mathcal{P}_{k}$ for $k=1,2,....,n.$ Next, we will construct a net
$\left(  y_{\beta}\right)  _{\beta\in B}$ which is a subnet of our initial net
and an eventual subnet of each $x^{n}$ so that it satisfies all properties
$\left(  \mathcal{P}_{n}\right)  .$ Its index set $B$ is consisting of all
elements $\beta$ of the form%
\[
\beta=\left(  \beta_{0},\beta_{1},...,\beta_{k}\right)  \in A_{0}\times
A_{1}\times...\times A_{k},
\]
where $k$ is an integer, and for each $i\in\left\{  1,2,...,k\right\}  ,$ we
have $\beta_{i-1}=\varphi_{i}\left(  \beta_{i}\right)  .$ We order the set $B$
by putting :%
\begin{align*}
\left(  \beta_{0},\beta_{1},...,\beta_{p}\right)   &  \preccurlyeq\left(
\gamma_{0},\gamma_{1},...,\gamma_{q}\right) \\
&  \Leftrightarrow p\leq q\text{ and }\beta_{i}\leq\gamma_{i}\text{ for }0\leq
i\leq p.
\end{align*}

(i) We claim that the ordered set $\left(  B,\preccurlyeq\right)  $ is
directed. To this end consider tow elements $\beta=\left(  \beta_{0},\beta
_{1},...,\beta_{p}\right)  $ and $\gamma=\left(  \gamma_{0},\gamma
_{1},...,\gamma_{q}\right)  $ in $B,$ and assume, for instance, that $p\leq
q$. As $A_{0}=A$ is directed there is $\theta_{0}\in A_{0}$ satisfying
$\theta_{0}\geq\beta_{0}$ and $\theta_{0}\geq\gamma_{0}.$ As $\varphi_{1}$ is
cofinal there exists $\alpha_{1}^{\prime}\in A_{1}$ such that $\alpha_{1}%
\geq\alpha_{1}^{\prime}\Longrightarrow\varphi_{1}\left(  \alpha_{1}\right)
\geq\theta_{0}.$ We choose $\theta_{1}\in A_{1}$ such that $\theta_{1}%
\geq\alpha_{1}^{\prime},\beta_{1},\gamma_{1}.$ Now we choose $\beta
_{p+1},...,\beta_{q}$ arbitrary in $A_{p+1},...,A_{q}$ and by iterating the
previous process we can construct a sequence $\theta_{i}\in A_{i},$ $1\leq
i\leq q$ such that $\theta_{i}\geq\beta_{i},$ $\theta_{i}\geq\gamma_{i},$ and%
\[
\theta\geq\theta_{i}\Longrightarrow\varphi_{i}\left(  \theta\right)
\geq\theta_{i-1}.
\]
Define $\alpha=\left(  \alpha_{0},...,\alpha_{q}\right)  $ by putting
$\alpha_{q}=\theta_{q}$ and for $i=1,...,q,$ $\varphi_{i}\left(  \alpha
_{i}\right)  =\alpha_{i-1}.$ It is quite clear that $\alpha$ belongs to $B$
and satisfies $\beta\preccurlyeq\alpha$ and $\gamma\preccurlyeq\alpha.$ This
shows that $\left(  B,\preccurlyeq\right)  $ is directed. We can then define
the desired subnet by setting $y_{\beta}=x_{\beta_{0}}$ for $\beta=\left(
\beta_{0},...,\beta_{n}\right)  \in B.$

(ii) We need to show that the net $y=\left(  y_{\beta}\right)  _{\beta\in B}$
is a subnet of $\left(  x_{\alpha}\right)  .$ To do so, it suffices to show
that the map $\varphi:B\longrightarrow A$ defined by $\varphi\left(
\beta\right)  =\beta_{0},$ is cofinal. Let $\alpha_{0}\in A$ be fixed. We aim
to find $\gamma\in B$ such that $\varphi\left(  \beta\right)  \geq\alpha_{0}$
whenever $\beta\geq\gamma.$ As $\varphi_{1}$ is cofinal, there exists
$\gamma_{1}\in A_{1}$ such that%
\[
\alpha_{1}\geq\gamma_{1}\Longrightarrow\varphi_{1}\left(  \alpha_{1}\right)
\geq\alpha_{0}.
\]
It follows that for each $\beta=\left(  \beta_{0},...,\beta_{p}\right)  \in B$
satisfying $\beta\succcurlyeq\gamma=\left(  \varphi\left(  \gamma_{1}\right)
,\gamma_{1}\right)  $ we have $\beta_{1}\geq\gamma_{1}$ and so $\varphi\left(
\gamma\right)  =\varphi_{1}\left(  \beta_{1}\right)  \geq\alpha_{0}.$ Thus
$\varphi$ is cofinal as it was claimed.

(iii) To complete our proof we need to show that $y$ is an eventual subnet of
$x^{n}$ for each $n.$ To this end, let us fix an element $\beta^{\prime
}=\left(  \beta_{0}^{\prime},\beta_{1}^{\prime}...,\beta_{n+1}^{\prime
}\right)  $ in $B.$ We will prove that the net $\left(  y_{\beta}\right)
_{\beta\geq\beta^{\prime}}$ is a subnet of $x^{n}.$ A cofinal map $\psi_{n}$
from $\Gamma_{n}:=\left\{  \beta\in B:\beta\succcurlyeq\beta^{\prime}\right\}
$ to $A_{n}$ should be defined. Our suitable candidate will be given by:%
\[
\varphi_{n}\left(  \beta\right)  =\beta_{n}\text{ for }\beta=\left(  \beta
_{1},....,\beta_{q}\right)  \in\Gamma_{n}.
\]
Let $\alpha_{n}$ be a fixed element in $A_{n}.$ Choose $\gamma_{n+1}\in
A_{n+1}$ such that%
\[
\alpha_{n+1}\geq\gamma_{n+1}\Longrightarrow\varphi_{n+1}\left(  \alpha
_{n+1}\right)  \geq\alpha_{n}.
\]
Now let $\gamma=\left(  \gamma_{0},...,\gamma_{n+1}\right)  $ with
$\gamma_{i-1}=\varphi_{i}\left(  \gamma_{i}\right)  $ for $i=1,...,n+1,$ and
let $\theta\in B$ satisfying $\theta\succcurlyeq\beta^{\prime}$ and
$\theta\succcurlyeq\gamma.$ Then for every $\beta\in B$ we have%
\[
\beta\succcurlyeq\theta\Longrightarrow\beta_{n}=\psi_{n}\left(  \beta\right)
\geq\theta_{n}\geq\alpha_{n}.
\]
This shows that $\psi_{n}$ is cofinal and completes the proof.
\end{proof}

\section{Examples of application}

We present in this section a few results as application of our main Theorem.
The two first applications are just new short proofs of known results in
topology and functional analysis. The two next applications are new results
and concern compact operators, answering open questions raised recently in
\cite{L-173} and \cite{L-287}, where authors treated the sequel case following
classical diagonal processes for sequences and left open the general case
because of the lack of a similar result, such result is provided by Theorem
\ref{Y19-D}. It is natural to ask if there is a more general diagonal process
where the set of properties is not assumed countable. In that case a
constructive proof can not be expected, and a use of Zorn's lemma seems to be
evident. This kind of result could be applied in more general situations and
allows to get a very short proofs for Tychonoff Theorem, Alaoglu theorem and
several other results in topology and functional analysis. We plan to explore
this idea in future work.

As stated earlier, we present a short proof of Tychonoff's Theorem for the
countable product of compact spaces. While this proof is well-known for
metrizable spaces, it has not been observed before for general topological
compact spaces.

\begin{theorem}
A countable product of compact spaces is compact.
\end{theorem}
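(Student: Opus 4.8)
The plan is to combine the net characterization of compactness with Theorem \ref{Y19-C}. Recall that a topological space is compact if and only if every net in it admits a convergent subnet. Thus, to prove that $P=\prod_{k=1}^{\infty}E_{k}$ is compact, it suffices to fix an arbitrary net $(x_{\alpha})_{\alpha\in A}$ in $P$ and produce a subnet of it that converges in $P$.

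I would apply Theorem \ref{Y19-C} to this net, regarding each coordinate space $E_{k}$ (equipped with its topological convergence) as the convergence space $E_{k}$ appearing in that statement. The hypothesis to verify is that for every subnet $(y_{\beta})_{\beta\in B}$ of $(x_{\alpha})$ and every integer $k$, there is a further subnet $(y_{\varphi(\gamma)})_{\gamma\in C}$ whose $k$-th coordinate converges in $E_{k}$. This is immediate from the compactness of $E_{k}$: the net $(y_{\beta}(k))_{\beta\in B}$ takes values in the compact space $E_{k}$, hence has a convergent subnet, and pulling back the associated cofinal map to $B$ yields the required subnet $(y_{\varphi(\gamma)})$.

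With the hypothesis in hand, Theorem \ref{Y19-C} delivers a subnet $(z_{\theta})$ of $(x_{\alpha})$ such that $z_{\theta}(k)$ converges in $E_{k}$ for every $k=1,2,\dots$. Since convergence in the product topology is exactly coordinatewise convergence, the net $(z_{\theta})$ converges in $P$. As $(x_{\alpha})$ was arbitrary, every net in $P$ admits a convergent subnet, and therefore $P$ is compact.

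I expect no serious obstacle here: the historical difficulty, namely the absence of a diagonal process for nets, is precisely what Theorem \ref{Y19-C} removes. The only points requiring care are the bookkeeping that the convergence notion used in each $E_{k}$ agrees with the product topology (coordinatewise convergence), and the standard equivalence between compactness and the existence of convergent subnets for nets.
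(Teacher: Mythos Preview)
Your argument is correct and follows exactly the paper's approach: apply Theorem~\ref{Y19-C} to an arbitrary net in the product, using compactness of each factor to supply the required coordinatewise-convergent subnets, and conclude via the net characterization of compactness together with the fact that product convergence is coordinatewise convergence. The paper's own proof is a one-line invocation of Theorem~\ref{Y19-C}; you have simply unpacked the details it leaves implicit.
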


\begin{proof}
Let $\left(  X_{n}\right)  $ be a sequence of compact spaces, $X=%
{\textstyle\prod\limits_{n=1}^{\infty}}
X_{n}$ their product. It is a direct application of Theorem \ref{Y19-C} that
every $\left(  x_{\alpha}\right)  $ in $X$ has a converging subnet, which
shows that $X$ is compact.
\end{proof}

Note that the results we have presented can also be applied to compactness in
the sense of convergence structures, which may not necessarily be topological.
As an example we say that a subset $A$ in a vector lattice $X$ is said to be
order compact if every net in $A$ has a convergent subnet with limit in $A.$
Theorem \ref{Y19-B} says that if $A_{k}$ is order compact in the vector
lattice $V_{k}$ then their product $A=\prod\limits_{k=1}^{\infty}A_{k}$ is
order compact in the vector lattice $V=\prod\limits_{k=1}^{\infty}V_{k}.$

By applying our main theorem to the dual space of a separable Banach space
$X$, we can obtain a short proof of the Banach-Alaoglu Theorem.

\begin{proposition}
If a Banach space $X$ is separable then the unit ball $B_{X^{\ast}}$ of
$X^{\ast}$ is weakly* compact.
\end{proposition}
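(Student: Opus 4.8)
The plan is to realize the unit ball $B_{X^{\ast}}$ as a net-theoretic subset of a countable product of compact scalar sets and then apply Theorem \ref{Y19-C}. Fix a countable dense subset $\{x_{k}\}_{k=1}^{\infty}$ of $X$, and for each $k$ let $E_{k}$ be the compact scalar disk $\{\lambda:|\lambda|\leq\|x_{k}\|\}$ with its usual topological convergence (a convergence space). To a functional $f\in B_{X^{\ast}}$ associate the point $(f(x_{k}))_{k}$ of $P:=\prod_{k=1}^{\infty}E_{k}$, which is legitimate since $|f(x_{k})|\leq\|f\|\,\|x_{k}\|\leq\|x_{k}\|$. Recall that weak* compactness of $B_{X^{\ast}}$ is equivalent to the assertion that every net in $B_{X^{\ast}}$ admits a subnet converging weak* to a point of $B_{X^{\ast}}$, so I would fix an arbitrary net $(f_{\alpha})_{\alpha\in A}$ in $B_{X^{\ast}}$ and set $x_{\alpha}=(f_{\alpha}(x_{k}))_{k}$, a net in $P$.

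Next I would verify the hypothesis of Theorem \ref{Y19-C}: for any subnet of $(x_{\alpha})$ and any index $k$, the $k$-th coordinates all lie in the compact set $E_{k}$, so they possess a further convergent subnet. Theorem \ref{Y19-C} then yields a subnet $(z_{\theta})$ of $(x_{\alpha})$, corresponding via the same cofinal map to a subnet $(f_{\theta})$ of $(f_{\alpha})$, such that the scalar net $f_{\theta}(x_{k})$ converges for every $k$.

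The main obstacle is to upgrade convergence on the dense set $\{x_{k}\}$ to genuine weak* convergence on all of $X$; this is where the uniform bound $\|f_{\theta}\|\leq 1$ is decisive. Given $x\in X$ and $\varepsilon>0$, choose $x_{k}$ with $\|x-x_{k}\|<\varepsilon$. Since $|f_{\theta}(x)-f_{\theta}(x_{k})|\leq\|x-x_{k}\|<\varepsilon$ uniformly in $\theta$ and the net $(f_{\theta}(x_{k}))_{\theta}$ converges, a standard $3\varepsilon$-argument shows that $(f_{\theta}(x))_{\theta}$ is a Cauchy, hence convergent, net of scalars. I would then define $g(x):=\lim_{\theta}f_{\theta}(x)$.

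Finally I would confirm that $g\in B_{X^{\ast}}$ and that $f_{\theta}\to g$ weak*. Linearity of $g$ is immediate from the linearity of each $f_{\theta}$ and the algebra of limits, while $|g(x)|=\lim_{\theta}|f_{\theta}(x)|\leq\|x\|$ forces $\|g\|\leq 1$, so $g\in B_{X^{\ast}}$. Because $f_{\theta}(x)\to g(x)$ for every $x\in X$, the subnet $(f_{\theta})$ converges to $g$ in the weak* topology. As $(f_{\alpha})$ was arbitrary, $B_{X^{\ast}}$ is weak* compact.
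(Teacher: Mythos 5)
Your proposal is correct and follows essentially the same route as the paper: fix a countable dense set, apply the net diagonal principle (Theorem \ref{Y19-C}) to extract a subnet converging at each point of the dense set, then use the uniform bound $\|f_{\theta}\|\leq 1$ and density to extend to pointwise convergence on all of $X$ and identify the limit as an element of $B_{X^{\ast}}$. Your write-up merely supplies details the paper leaves implicit, such as realizing the coordinate spaces as compact scalar disks to verify the hypothesis of Theorem \ref{Y19-C} and spelling out the $3\varepsilon$-argument.
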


\begin{proof}
Let $D$ be a dense countable subset of $X$ and let $\left(  x_{\alpha}^{\ast
}\right)  $ be a net in $B_{X^{\ast}}.$ By Theorem \ref{Y19-A} there is a
subnet $\left(  y_{\beta}^{\ast}\right)  $ of $\left(  x_{\alpha}\right)  $
such that $y_{\beta}^{\ast}\left(  x_{n}\right)  $ converges for every $x\in
D.$ It follows easily that $y_{\beta}^{\ast}\left(  x\right)  $ converges for
every $x\in X.$ Denote $y^{\ast}\left(  x\right)  =\lim y_{\beta}^{\ast
}\left(  x\right)  .$ It is easy to see that $y$ is linear and as $\left\vert
y_{\beta}^{\ast}\left(  x\right)  \right\vert \leq\left\Vert x\right\Vert $
for all $\beta\in B$ we get $\left\vert y^{\ast}\left(  x\right)  \right\vert
\leq\left\Vert x\right\Vert ,$ so $y^{\ast}\in B_{X^{\ast}},$ which completes
the proof.
\end{proof}

\section{Two applications}

Let $E$ be a Banach space and $X$ a Banach lattice and denote by $L\left(
E,X\right)  $ the space of all linear continuous operators from $E$ to $X.$ An
operator $T:E\longrightarrow X$ is said to be sequentially un-compact if every
bounded sequence $\left(  x_{n}\right)  $ has a subsequence $\left(
x_{\varphi\left(  n\right)  }\right)  $ such that $Tx_{\varphi\left(
n\right)  }$ is un-convergent. On the other hand we say that $T$ is un-compact
if every bounded net $\left(  x_{\alpha}\right)  $ in $X$ has a subnet
$\left(  y_{\beta}\right)  $ such that $\left(  Ty_{\beta}\right)  $ is
un-convergent. It was previously shown in \cite[Proposition 9.2]{L-171} that
the set of all sequentially un-compact operators in $L(E,X)$ is a closed
subspace of $L(E,X)$, but it remained an open question whenever the limit of a
sequence of un-compact operators in $L\left(  X,E\right)  $ was still
un-compact. In this work we provide a positive answer to this question.

\begin{theorem}
Let $E$ be a Banach space and $X$ a Banach lattice. Then the set
$K_{un}\left(  E,X\right)  $ of all un-compact operators from $E$ to $X$ is a
closed subspace of $L\left(  E,X\right)  .$
\end{theorem}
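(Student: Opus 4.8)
The plan is to show that $K_{un}(E,X)$ is a linear subspace (routine) and then focus on the essential point, that it is norm-closed. So let me sketch the closedness argument. Suppose $(T_m)$ is a sequence in $K_{un}(E,X)$ converging in operator norm to some $T \in L(E,X)$. I must show $T$ is un-compact, i.e.\ that every bounded net $(x_\alpha)$ in $E$ admits a subnet $(y_\beta)$ such that $(Ty_\beta)$ is un-convergent. The guiding idea is exactly the diagonal principle: the sequence of properties $(\mathcal{P}_m)$ will be ``$(Tx_\alpha)$-type behaviour forced by the $m$-th operator $T_m$'', and Theorem~\ref{Y19-D} will glue them into a single subnet that works for all $m$ simultaneously; the norm convergence $T_m \to T$ then upgrades this to un-convergence of $(Ty_\beta)$ itself.

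\medskip

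\textbf{Step 1 (the properties).} Fix a bounded net $(x_\alpha)$, say with $\|x_\alpha\| \le R$. For each $m$, I would like to say that a subnet $(y_\beta)$ satisfies $\mathcal{P}_m$ if $(T_m y_\beta)$ is un-convergent. Since each $T_m$ is un-compact, every subnet of $(x_\alpha)$ has a further subnet along which $(T_m y_\beta)$ un-converges, so the hypothesis (ii) of Theorem~\ref{Y19-D} holds for each $m$. The one delicate point is verifying hypothesis (i): that $\mathcal{P}_m$ is preserved under passage to eventual subnets. Un-convergence is a genuine net-convergence (it is convergence in the un-structure, which one checks behaves like a convergence space in the sense of \cite{L-748}), and convergence of a net is inherited by every subnet and is insensitive to discarding an initial segment; hence $\mathcal{P}_m$ passes to eventual subnets. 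This is the hypothesis check I expect to require the most care to state cleanly.

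\medskip

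\textbf{Step 2 (apply the diagonal principle).} With (i) and (ii) in hand, Theorem~\ref{Y19-D} yields a single subnet $(z_\theta)$ of $(x_\alpha)$ satisfying $\mathcal{P}_m$ for \emph{all} $m$ simultaneously; that is, $(T_m z_\theta)$ is un-convergent for every $m$. Write $T_m z_\theta \overset{\mathfrak{un}}{\longrightarrow} \ell_m$ for its limit. This is the crux: the principle manufactures a common subnet, whereas the classical sequential diagonal argument would only produce a common subsequence and cannot handle arbitrary nets.

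\medskip

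\textbf{Step 3 (transfer to $T$ via norm convergence).} It remains to show $(Tz_\theta)$ is itself un-convergent. The expected main obstacle is precisely this transfer step. Fix $u \in X^+$. For each $m$,
\[
|Tz_\theta - Tz_{\theta'}| \wedge u \le |T_m z_\theta - T_m z_{\theta'}| \wedge u + \bigl(|Tz_\theta - T_m z_\theta| + |T_m z_{\theta'} - Tz_{\theta'}|\bigr)\wedge u,
\]
and the last bracket is bounded in norm by $2R\,\|T - T_m\|$ uniformly in $\theta,\theta'$. Taking norms and using that $(T_m z_\theta)$ is un-Cauchy for each fixed $m$, an $\varepsilon/3$-argument shows $(Tz_\theta)$ is un-Cauchy; since $X$ is un-complete along nets (or one identifies the limit directly as $\lim_m \ell_m$, which exists because $\|\ell_m - \ell_{m'}\| \le 2R\|T_m - T_{m'}\| \to 0$ makes $(\ell_m)$ norm-Cauchy with limit $\ell$, and then $Tz_\theta \overset{\mathfrak{un}}{\longrightarrow} \ell$), we conclude $(Tz_\theta)$ un-converges. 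Thus $T$ is un-compact, and $K_{un}(E,X)$ is closed. The subspace property (that a linear combination of un-compact operators is un-compact) follows by a routine two-fold application of the same diagonal extraction to $\alpha T + \beta S$, extracting first a subnet making $(Sy_\beta)$ un-convergent and then a further subnet making $(Ty_\beta)$ un-convergent, and using linearity of un-limits.
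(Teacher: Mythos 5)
Your overall construction is the paper's own: identical properties $\mathcal{P}_m$ (``$(T_m y_\beta)$ is un-convergent''), the same verification of hypotheses (i) and (ii) of Theorem \ref{Y19-D}, the same common subnet $(z_\theta)$ with $T_m z_\theta \overset{\mathfrak{un}}{\longrightarrow}\ell_m$, and the same $\varepsilon/3$-style transfer using $\|T_m - T\|\to 0$. However, the first route you offer in Step 3 contains a genuine error: a Banach lattice is in general \emph{not} un-complete, so ``$(Tz_\theta)$ is un-Cauchy'' does not yield an un-limit. For a concrete failure, take $X = c_0$ and $x_n = \sum_{k=1}^{n} e_k$: for every $u \in c_0^+$ and $n > m$ one has $\|\,|x_n - x_m| \wedge u\| \leq \sup_{k>m} u_k \rightarrow 0$, so the sequence is un-Cauchy, yet it has no un-limit in $c_0$, since testing against $u = e_k$ forces any limit to be the constant-one sequence, which lies outside $c_0$. (The same phenomenon occurs in $\ell^p$, where un-convergence is coordinatewise convergence.) So the clause ``since $X$ is un-complete along nets'' must be deleted, not merely hedged.

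Fortunately, your parenthetical alternative is exactly the paper's argument, and it is correct; it should be promoted to the main line. The estimate $\|\ell_m - \ell_{m'}\| \leq 2R\,\|T_m - T_{m'}\|$ does hold, but note the step your sketch leaves implicit: one first proves $\|\,|\ell_m - \ell_{m'}| \wedge u\| \leq 2R\,\|T_m - T_{m'}\|$ for \emph{every} $u \in X^+$ (three-term split around $T_m z_\theta$ and $T_{m'} z_\theta$, then $\limsup$ over $\theta$), and only then makes the special choice $u = |\ell_m - \ell_{m'}|$, which makes the infimum with $u$ disappear and converts the un-estimate into a norm estimate; this is the paper's choice $u = |z_p| + |z_q|$. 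With $(\ell_m)$ norm-Cauchy, hence norm-convergent to some $\ell$, the final split
\[
\left\Vert \left\vert Tz_{\theta}-\ell\right\vert \wedge u\right\Vert \leq
R\left\Vert T-T_{m}\right\Vert +\left\Vert \left\vert T_{m}z_{\theta}-\ell
_{m}\right\vert \wedge u\right\Vert +\left\Vert \ell_{m}-\ell\right\Vert
\]
gives, after $\limsup$ over $\theta$ and then $m\rightarrow\infty$, that $Tz_\theta \overset{\mathfrak{un}}{\longrightarrow}\ell$. With that repair your proof coincides with the one in the paper.
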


\begin{proof}
The fact that $K_{un}\left(  E,X\right)  $ is a subspace of $L\left(
E,X\right)  $ is obvious. We need only to show the closedness. Let $\left(
T_{n}\right)  $ be a sequence of un-compact operators in $L\left(  E,X\right)
$ that converges to $T.$ Let $\left(  x_{\alpha}\right)  $ be a norm bounded
net in $E.$ We aim to show that $\left(  Tx_{\alpha}\right)  $ has a
un-convergent subnet. We say that a net $\left(  u_{i}\right)  $ in $X$
satisfies property $\left(  \mathcal{P}_{n}\right)  $ if $T_{n}u_{i}$ is
un-convergent in $X.$ According to our assumption each property $\mathcal{P}%
_{k}$ is perseved when passing to eventual subnets. As $T_{n}$ is un-compact
each subnet $\left(  y_{\beta}\right)  $ of $\left(  x_{\alpha}\right)  $ has
a further subnet satisfying property $\mathcal{P}_{k}.$ Therefore by applying
Theorem \ref{Y19-D} there is a subnet $\left(  y_{\beta}\right)  $ of $\left(
x_{\alpha}\right)  $ such that $T_{n}y_{\beta}\overset{un}{\longrightarrow
}z_{n}$ for $n=1,2,...$ In order to conclude our proof, we need to establish
the following facts.
\end{proof}

(i) $z_{n}\overset{un}{\longrightarrow}z$ for some $z\in X.$

(ii) $Ty_{\beta}\overset{un}{\longrightarrow}z.$

\textbf{Proof of (i).} For any $u\in X_{+},$ we have the inequality%

\[
\left\Vert \left\vert z_{p}-z_{q}\right\vert \wedge u\right\Vert
\leq\left\Vert \left\vert z_{p}-T_{p}y_{\beta}\right\vert \wedge u\right\Vert
+\left\Vert \left\vert T_{q}y_{\beta}-z_{q}\right\vert \wedge u\right\Vert
+\left\Vert \left\vert T_{p}y_{\beta}-T_{q}y_{\beta}\right\vert \wedge
u\right\Vert .
\]
Taking the $\lim\sup$ over $\beta$ we get%

\[
\left\Vert \left\vert z_{p}-z_{q}\right\vert \wedge u\right\Vert \leq
\limsup\limits_{\beta}\left\Vert \left\vert \left(  T_{q}-T_{p}\right)
y_{\beta}\right\vert \wedge u\right\Vert \leq\left\Vert T_{p}-T_{q}\right\Vert
\sup\left\Vert y_{\beta}\right\Vert .
\]
This gives, for $u=\left\vert z_{p}\right\vert +\left\vert z_{q}\right\vert ,$%

\[
\left\Vert z_{p}-z_{q}\right\Vert \leq\left\Vert T_{p}-T_{q}\right\Vert
\sup\left\Vert y_{\beta}\right\Vert .
\]
This proves that $\left(  z_{n}\right)  $ is a Cauchy sequence and establishes
(i) as $X$ is a Banach space.

\textbf{Proof of (ii) }Let $u\in X_{+}$ and let $M=\sup\left\Vert y_{\beta
}\right\Vert .$ Then we have for all $n,$%

\begin{align*}
\left\Vert \left\vert Ty_{\beta}-z\right\vert \wedge u\right\Vert  &
\leq\left\Vert \left\vert Ty_{\beta}-T_{n}y_{\beta}\right\vert \wedge
u\right\Vert +\left\Vert \left\vert T_{n}y_{\beta}-z_{n}\right\vert \wedge
u\right\Vert +\left\Vert \left\vert z_{n}-z\right\vert \wedge u\right\Vert \\
&  \leq\left\Vert T-T_{n}\right\Vert M+\left\Vert \left\vert T_{n}y_{\beta
}-z_{n}\right\vert \wedge u\right\Vert +\left\Vert z_{n}-z\right\Vert .
\end{align*}
Now taking the $\lim\sup$ over $\beta$ we get%

\[
\limsup\limits_{\beta}\left\Vert \left\vert Ty_{\beta}-z\right\vert \wedge
u\right\Vert \leq\left\Vert T-T_{n}\right\Vert M+\left\Vert z_{n}-z\right\Vert
.
\]

Now (ii) follows from (i) and the fact that $T_{n}\longrightarrow T.$ This
completes the proof.

In \cite{L-287}, several types of compact operators were introduced in . Let
$\left(  X,p,E\right)  $ be a decomposable lattice normed space and $\left(
Y,q,F\right)  $ be a lattice normed space with $F$ is Dedekind complete. We
know that every dominated operator $T:X\longrightarrow Y$ has an exact
dominant $\left\vert T\right\vert $ (see ??). Thus, the triple $\left(
M\left(  X,Y\right)  ,\pi,L^{\sim}\left(  X,Y\right)  \right)  $ is a lattice
normed space with $\pi\left(  T\right)  =\left\vert T\right\vert $ (see,...Ref
in \cite{L-287}). Thus if $\left(  T_{\alpha}\right)  $ is a net in $M\left(
X,Y\right)  $ then $T_{\alpha}\overset{\mathrm{p}}{\longrightarrow}T$ in
$M\left(  X,Y\right)  $ if and only if $\left\vert T_{\alpha}-T\right\vert
\overset{o}{\longrightarrow}0$ in $L^{\sim}\left(  X,Y\right)  .$ It is proved
in \cite[Theorem 5]{L-287} that under these assumptions if $\left(
T_{n}\right)  $ is a sequence of sequentially $p$-compact operators that
$p$-converges in $M\left(  X,Y\right)  $ to $T$ then $T$ is sequentially
$p$-compact. Again here there is no version for nets.

\begin{theorem}
Let $E$ and $F$ be two Riesz spaces with $F$ is Dedekind complete. Let
$\left(  T_{n}\right)  $ be a sequence of order compact operators from $E$ to
$F.$ We assume that $T_{n}\longrightarrow T$ relatively uniformly. Then $T$ is
order compact.
\end{theorem}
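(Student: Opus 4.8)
The plan is to run the same scheme as in the preceding theorem on un-compact operators, with order convergence in place of norm convergence and a relatively uniform estimate in place of the operator-norm one. Fix an order bounded net $(x_{\alpha})$ in $E$, say $\left\vert x_{\alpha}\right\vert \leq v$ for some $v\in E^{+}$; I must extract a subnet $(y_{\beta})$ for which $(Ty_{\beta})$ is order convergent. For each $n$, declare that a net $(u_{i})$ in $E$ satisfies $\left(\mathcal{P}_{n}\right)$ when $(T_{n}u_{i})$ is order convergent in $F$. Order convergence is inherited by subnets and by tails, so each $\left(\mathcal{P}_{n}\right)$ is preserved under passage to eventual subnets; and since every $T_{n}$ is order compact, each subnet of $(x_{\alpha})$ admits a further subnet satisfying $\left(\mathcal{P}_{n}\right)$. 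Theorem \ref{Y19-D} then produces a single subnet $(y_{\beta})_{\beta\in B}$ of $(x_{\alpha})$, still bounded by $v$, together with elements $z_{n}\in F$ such that $T_{n}y_{\beta}\overset{o}{\longrightarrow}z_{n}$ for every $n$.

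Next I exploit relative uniform convergence: there are a positive operator $w$ and scalars $\epsilon_{n}\downarrow0$ with $\left\vert T_{n}-T\right\vert \leq\epsilon_{n}w$ (the modulus existing since $F$ is Dedekind complete). The Riesz--Kantorovich inequality gives, for every $\beta$, $\left\vert (T_{p}-T_{q})y_{\beta}\right\vert \leq\left\vert T_{p}-T_{q}\right\vert (v)\leq(\epsilon_{p}+\epsilon_{q})w(v)$ and $\left\vert (T-T_{n})y_{\beta}\right\vert \leq\epsilon_{n}w(v)$. Estimating $\left\vert z_{p}-z_{q}\right\vert \leq\left\vert z_{p}-T_{p}y_{\beta}\right\vert +\left\vert T_{q}y_{\beta}-z_{q}\right\vert +(\epsilon_{p}+\epsilon_{q})w(v)$ and letting $\beta$ run so that the first two terms order-converge to $0$, I obtain $\left\vert z_{p}-z_{q}\right\vert \leq(\epsilon_{p}+\epsilon_{q})w(v)$. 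Thus $(z_{n})$ is relatively uniformly Cauchy with regulator $w(v)$, that is, a Cauchy sequence in the principal ideal $F_{w(v)}$ under its order-unit norm. As $F$ is Dedekind complete it is uniformly complete, so $F_{w(v)}$ is a Banach lattice and $(z_{n})$ converges to some $z\in F$, i.e. $\left\vert z_{n}-z\right\vert \leq\delta_{n}w(v)$ with $\delta_{n}\downarrow0$.

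It remains to show $Ty_{\beta}\overset{o}{\longrightarrow}z$. Combining the inequalities above, for all $n$ and all $\beta$ we have $\left\vert Ty_{\beta}-z\right\vert \leq\left\vert T_{n}y_{\beta}-z_{n}\right\vert +(\epsilon_{n}+\delta_{n})w(v)$. Since each order compact operator is order bounded, $T$ is order bounded as a relatively uniform limit, so $\{Ty_{\beta}\}$ is order bounded and $a:=\inf_{\beta_{0}}\sup_{\beta\geq\beta_{0}}\left\vert Ty_{\beta}-z\right\vert$ exists in $F$. Applying $\sup_{\beta\geq\beta_{0}}$ and then $\inf_{\beta_{0}}$ to the displayed inequality, the contribution of $\left\vert T_{n}y_{\beta}-z_{n}\right\vert$ is $0$ because $T_{n}y_{\beta}\overset{o}{\longrightarrow}z_{n}$, whence $0\leq a\leq(\epsilon_{n}+\delta_{n})w(v)$ for every $n$. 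Letting $n\to\infty$ forces $a=0$, which is precisely $Ty_{\beta}\overset{o}{\longrightarrow}z$; hence $T$ is order compact.

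The principal difficulty is the middle step, where the relatively uniform Cauchy estimate on $(z_{n})$ must be upgraded to an honest limit. This is the only place Dedekind completeness is genuinely decisive, via the norm completeness of the principal ideal generated by the single regulator $w(v)$; the key is that all estimates are channelled through this one regulator, keeping the argument inside a single Banach lattice. The final $\limsup$ manipulation is comparatively routine once order boundedness guarantees the existence of $a$ and permits interchanging the order upper limit with the fixed element $(\epsilon_{n}+\delta_{n})w(v)$.
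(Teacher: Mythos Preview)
Your proof is correct and follows essentially the same route as the paper: apply the diagonal principle for nets to obtain a common subnet with $T_{n}y_{\beta}\overset{o}{\to}z_{n}$, show $(z_{n})$ is relatively uniformly Cauchy via $\lvert z_{p}-z_{q}\rvert\le\lvert T_{p}-T_{q}\rvert(v)$, and then finish with the triangle inequality and an order $\limsup$ argument. You are simply more explicit than the paper on two points it leaves implicit, namely the appeal to uniform completeness of the Dedekind complete lattice $F$ to pass from ru-Cauchy to an actual limit $z$, and the justification that the order $\limsup$ $a$ exists; otherwise the arguments are the same.
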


\begin{proof}
It follows from our assumptions that there exists an operator $S\in
L_{b}\left(  E,F\right)  $ and a real sequence $\varepsilon_{n}$, decreasing
to $0,$ satisfying the inequality $\left\vert T_{n}-T\right\vert
\leq\varepsilon_{n}S.$ We shall say that a net $\left(  u_{\alpha}\right)  $
satisfies property $\mathcal{P}_{n}$ if the net $\left(  T_{n}u_{\alpha
}\right)  $ is ru-convergent in $F.$ Consider now an order bounded net
$\left(  x_{\alpha}\right)  _{\alpha\in A}$ in $E.$ The application of Theorem
\ref{Y19-A} yields a subnet $\left(  z_{\beta}\right)  $ of $\left(
x_{\alpha}\right)  $ and a sequence $\left(  y_{n}\right)  $ in $F$ such that%
\[
T_{n}x_{\varphi\left(  w\right)  }\overset{o}{\longrightarrow}y_{n}\text{ for
all }n=1,2,...
\]
We claim now that%
\begin{equation}
y_{n}\overset{ru}{\longrightarrow}y.\label{C1}%
\end{equation}
Since $F$ is Dedekind complete there is a net $\left(  z_{\alpha_{n}}\right)
_{\alpha_{n}\in A_{n}}$ decreasing to zero such that%
\[
\left\vert Tx_{\psi_{n}\left(  \alpha_{n}\right)  }-y_{n}\right\vert \leq
z_{\alpha_{n}}.
\]
Now we have%
\begin{align*}
\left\vert y_{p}-y_{q}\right\vert  &  \leq\left\vert y_{p}-T_{p}x_{\psi
_{p}\left(  \alpha_{p}\right)  }\right\vert +\left\vert T_{p}x_{\psi
_{p}\left(  \alpha_{p}\right)  }-T_{q}x_{\psi_{q}\left(  \alpha_{q}\right)
}\right\vert +\left\vert T_{q}x_{\psi_{q}\left(  \alpha_{q}\right)  }%
-x_{q}\right\vert \\
&  \leq\gamma_{\alpha_{p}}+\left\vert T_{p}-T_{q}\right\vert \left\vert
u\right\vert +\gamma_{\alpha_{q}}.
\end{align*}
Taking the limit over $\alpha_{p}$ and $\alpha_{q}$ we get:
\[
\left\vert y_{p}-y_{q}\right\vert \leq\left\vert T_{p}-T_{q}\right\vert
\left\vert u\right\vert .
\]
Now we prove the second claim. We have for every $n,$%
\begin{align*}
\left\vert Tx_{\varphi\left(  w\right)  }-y\right\vert  &  \leq\left\vert
Tx_{\varphi\left(  w\right)  }-T_{n}x_{\varphi\left(  w\right)  }\right\vert
\\
&  +\left\vert T_{n}x_{\varphi\left(  w\right)  }-y_{n}\right\vert +\left\vert
y_{n}-y\right\vert \\
&  \leq\left\vert T_{n}-T\right\vert \left\vert u\right\vert +\left\vert
y_{n}-y_{n}\right\vert +\left\vert T_{n}x_{\varphi\left(  w\right)  }%
-y_{n}\right\vert .
\end{align*}
It follows from (\ref{C1}) that%
\[
\lim\sup\left\vert Tx_{\varphi\left(  w\right)  }-y\right\vert \leq\left\vert
T_{n}-T\right\vert \left\vert u\right\vert +\left\vert y_{n}-y_{n}\right\vert
.
\]
But since $y_{n}\longrightarrow y$ and $T_{n}\longrightarrow T$ we obtain
$\lim\sup\left\vert Tx_{\varphi\left(  w\right)  }-y\right\vert =0,$ which
shows that $Tx_{\varphi\left(  w\right)  }\overset{o}{\longrightarrow}y$ as required.
\end{proof}

\end{document}